\def\Z{\mathbb{Z}}
\def\N{\mathbb{N}}
\def\R{\mathbb{R}}
\newcommand{\comment}[1]{}
\newcommand{\du}[2]{\left\langle #1, #2 \right\rangle}
\DeclareMathOperator{\Sp}{Sp}
\DeclareMathOperator{\Lex}{Lex}
\DeclareMathOperator{\supp}{supp}
\newtheorem{theorem}{Theorem}[section]
\newtheorem{lemma}[theorem]{Lemma}
\newtheorem{corol}[theorem]{Corollary}
\newtheorem{prop}[theorem]{Proposition}
\theoremstyle{definition}
\newtheorem{defn}[theorem]{Definition}
\newtheorem{remark}[theorem]{Remark}
\begin{document}

\title[Lexicographic cones and the ordered projective tensor product]{Lexicographic cones and the \\ ordered projective tensor product}

\author{Marten Wortel}

\address{Department of Mathematics and Applied Mathematics \\ University of Pretoria \\
Private Bag X20 Hatfield \\
0028 Pretoria \\
South Africa}

\email{marten.wortel@up.ac.za}

\dedicatory{Dedicated to Ben de Pagter on the occasion of his 65th birthday}

\begin{abstract}
We introduce lexicographic cones, a method of assigning an ordered vector space $\Lex(S)$ to a poset $S$, generalising the standard lexicographic cone. These lexicographic cones are then used to prove that the projective tensor cone of two arbitrary cones is a cone, and to find a new characterisation of finite-dimensional vector lattices. 
\end{abstract}

\subjclass{Primary 06F20; Secondary 46A40, 46M05}

\keywords{Lexicographic cone, finite-dimensional vector lattices, ordered projective tensor product}

\maketitle

\section{Introduction}

In the theory of Archimedean vector lattices, the Fremlin projective tensor product is an important tool with many applications. The Fremlin projective tensor product was introduced by Fremlin in \cite{fremlin}, and it satisfies the usual universal property for Riesz bimorphisms. However, the construction of the Fremlin tensor product is fairly complicated and uses representation theory.

In \cite{grobler_labuschagne}, Grobler and Labuschagne gave an easier construction of the Fremlin projective tensor product. A crucial ingredient in their construction is the wedge generated by tensors of positive elements in the algebraic tensor product, called the projective cone (the projective cone of ordered vector spaces was introduced and investigated earlier, cf.\ \cite{nakano, schaefer2, merklen, ellis, peressini_sherbert,birnbaum}). Amongst other things, they show that the projective cone of two Archimedean ordered vector spaces with the Riesz decomposition property is actually a cone (\cite[Theorem~2.5]{grobler_labuschagne}); despite its name, it is a priori not clear at all that the projective cone is a cone. In \cite[Theorem~3.3]{onno_anke}, this result was extended by van Gaans and Kalauch to Archimedean ordered vector spaces, removing the Riesz decomposition requirement.

In this paper we go one step further and show that the projective tensor cone of two arbitrary ordered vector spaces is a cone, removing the Archimedean requirement. Our main tool is lexicographic cones, which is a method for assigning an ordered vector space $\Lex(S)$ to any poset $S$. In finite dimensions, by choosing $S$ appropriately, this generates the standard lexicographic cone $\R^d_{lex}$, the standard cone $\R^d_+$, and many new intermediate ordered vector spaces. It turns out that the projective tensor product of these lexicographic cones has a very nice description, cf.\ Proposition~\ref{p:product_lexico}, which allows us to prove the above mentioned result.

In \cite[Theorem~3.9]{schaefer}, Schaefer gave a recursive characterisation of finite-dimensional vector lattices. It turns out that these can be reformulated in terms of $\Lex(S)$ for appropriate $S$, so the lexicographic cones also yield an alternative, direct characterisation of finite-dimensional vector lattices, cf.\ Theorem~\ref{thm:fin_dim_lex}.

We briefly explain the structure of the paper. In Section~\ref{s:lex_cones}, we start by introducing $\Lex(S)$ and proving some basic properties of these ordered vector spaces; we also investigate the dual cone of these spaces. We characterise when $\Lex(S)$ is a vector lattice in Section~\ref{sec:fin_dim_lattices} and prove the characterisation of finite-dimensional vector lattices mentioned above. In Section~\ref{s:proj_tensor_prod}, we investigate the projective tensor product of these lexicographic cones and prove the main result that this is a cone.

\section{Lexicographic cones}\label{s:lex_cones}

A \emph{wedge} $C$ in a vector space $X$ is a convex subset satisfying $C + C \subset C$ and $\lambda C \subset C$ for all $\lambda \geq 0$. A wedge $C$ is called a \emph{cone} if $C \cap -C = \{0\}$. An \emph{ordered vector space} is a vector space $X$ equipped with a linear order, i.e., if $x,y \in X$ and $x \leq y$, then $x+z \leq y+z$ for all $z \in X$ and $\lambda x \leq \lambda y$ for all $\lambda \geq 0$. A linear order on $X$ generates the cone of positive elements $C \coloneqq \{x \in X \colon x \geq 0\}$, and conversely, every cone $C$ generates a linear order defined by $x \leq y$ if and only if $y-x \in C$.

In this paper, $S$ will always denote a poset (partially ordered set). For $s \in S$, we denote the set $\{t \in S \colon t < s\}$ by $\langle s)$ and the set $\{t \in S \colon t \leq s\}$ is denoted by $\langle s]$. The symbols $(s \rangle$ and $[s \rangle$ have similar meaning.

Let $F_0(S)$ be the vector space of finitely supported real-valued functions on $S$. Then $\Lex(S)$ is defined to be the vector space $F_0(S)$ equipped with the cone
$$
\Lex(S)_+ \coloneqq \{f \in F_0(S) \colon f(s) < 0 \Rightarrow  \exists t<s \text{ with } f(t) >0 \}. 
$$

\begin{lemma}\label{l:lex_is_cone}
$\Lex(S)_+$ is a cone.
\end{lemma}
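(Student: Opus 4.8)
The plan is to check the two defining properties of a cone in turn: that $\Lex(S)_+$ is a wedge, and that $\Lex(S)_+ \cap (-\Lex(S)_+) = \{0\}$. Two of the three wedge axioms are immediate. The zero function lies in $\Lex(S)_+$ because the implication in the definition is vacuously satisfied, and if $f \in \Lex(S)_+$ and $\lambda \geq 0$ then $\lambda f \in \Lex(S)_+$, since multiplying by a nonnegative scalar creates no new strict signs: for $\lambda > 0$ we have $(\lambda f)(s) < 0 \iff f(s) < 0$, and then the same $t < s$ with $f(t) > 0$ witnesses $(\lambda f)(t) > 0$, while $\lambda = 0$ gives the zero function. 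Once closure under addition is established, convexity follows for free, as $\mu f + (1-\mu) g$ with $\mu \in [0,1]$ is then a sum of two elements of $\Lex(S)_+$.

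The real work, and the step I expect to be the main obstacle, is closure under addition. The naive attempt fails: if $f, g \in \Lex(S)_+$, $h \coloneqq f + g$, and $h(s) < 0$, then one of $f(s), g(s)$ is negative, say $f(s) < 0$, so there is $t < s$ with $f(t) > 0$; but there is no reason for $h(t) > 0$, since $g(t)$ could be very negative. To fix this I would argue by contradiction and exploit finite support. Suppose there is $s_0$ with $h(s_0) < 0$ but $h(t) \leq 0$ for all $t < s_0$. The set $\{t \in \langle s_0] : h(t) \neq 0\}$ is finite and nonempty, hence has a minimal element $m$; then $h(m) < 0$ while $h(t) = 0$ for every $t < m$. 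From $h(m) < 0$, one of $f, g$, call it $\varphi_0$ (with $\varphi_1$ the other), satisfies $\varphi_0(m) < 0$, so there is $u_1 < m$ with $\varphi_0(u_1) > 0$; since $h(u_1) = 0$ this forces $\varphi_1(u_1) < 0$, producing $u_2 < u_1$ with $\varphi_1(u_2) > 0$, and $h(u_2) = 0$ then forces $\varphi_0(u_2) < 0$, and so on. Iterating yields an infinite strictly descending chain $m > u_1 > u_2 > \cdots$, each $u_i$ lying in the finite set $\supp f \cup \supp g$, a contradiction. Hence no such $s_0$ exists, i.e.\ $h \in \Lex(S)_+$.

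Finally, for the cone property, suppose $f \in \Lex(S)_+ \cap (-\Lex(S)_+)$ with $f \neq 0$. Then $\supp f$ is finite and nonempty, so it has a minimal element $m$, and $f(m) \neq 0$. If $f(m) < 0$, then $f \in \Lex(S)_+$ yields some $t < m$ with $f(t) > 0$, contradicting minimality of $m$ in $\supp f$; if $f(m) > 0$, then $(-f)(m) < 0$ and $-f \in \Lex(S)_+$ yields some $t < m$ with $f(t) < 0$, again contradicting minimality. Therefore $f = 0$, and $\Lex(S)_+$ is a cone. The only genuinely non-routine ingredient is the descent argument of the second paragraph; everything else is bookkeeping.
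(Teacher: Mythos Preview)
Your proof is correct and follows essentially the same strategy as the paper: a descent argument exploiting finite support to handle both closure under addition and the cone property. The organization differs only slightly---you first pass to a minimal element $m$ of $\{t \in \langle s_0] : h(t) \neq 0\}$ before descending, whereas the paper descends directly from $s_0$; and for the cone property you argue via a minimal element of $\supp f$, which is a touch cleaner than the paper's alternating descent between $f$ and $-f$---but the underlying idea is identical.
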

\begin{proof}
To show that $\Lex(S)_+$ is a wedge, let $f, g \in \Lex(S)_+$. If $s_0 \in I$ is such that $(f+g)(s_0) < 0$, then either $f(s_0) < 0$ or $g(s_0) < 0$; assume the first case. Then there is an $s_1 < s_0$ with $f(s_1) > 0$. If $g(s_1) \geq 0$ then $(f+g)(s_1) > 0$, and if $g(s_1) < 0$ then there is an $s_2 < s_1$ with $g(s_2) > 0$. If $f(s_2) \geq 0$ then $(f+g)(s_2) > 0$, and if $f(s_2) < 0$ then there is an $s_3 < s_2$ with $f(s_3) > 0$. Hence either $(f+g)(s_k) > 0$ for some $k \in \N$, or $f(s_{2n+1}) > 0$ and $g(s_n) > 0$ for all $n \in \N$, the latter case contradicting the fact that $f$ and $g$ are finitely supported. Since $s_k < s_0$, $f+g \in \Lex(S)_+$.

We now show that $\Lex(S)_+$ is a cone, so suppose $\pm f \in \Lex(S)_+$ and that $f(s_0) < 0$ for some $s_0 \in S$. Then $f(s_1) > 0$ for some $s_1 < s_0$, and so $-f(s_1) < 0$ which implies that $-f(s_2) > 0$ for some $s_2 < s_1$. Repeating this argument shows that $f$ is supported on an infinite set, which contradicts $f \in F_0(S)$. Hence $\Lex(S)_+$ is a cone.
\end{proof}

Denote by $e_s$ the function $t \mapsto \delta_{st}$, then $\{e_s\}_{s \in S}$ forms a basis for $\Lex(S)$. If $S = \{1, \ldots, d\}$ with the standard ordering, then $\Lex(S)$ is the usual lexicographic cone $\R^d_{lex}$, whereas if $S = \{1, \ldots, d \}$ with no elements comparable, then $\Lex(S)$ is the standard cone $\R^d_+$. Generalising the previous example, if $S$ is an arbitrary disjoint union of posets $S_k$, then it is easy to see that $\Lex(S) \cong \bigoplus_k \Lex(S_k)$ (this direct sum is an order direct sum: an element is positive if and only if all its components are positive).

Let $F(S)$ be the vector space of real-valued functions on $S$. Under the natural duality $\du{f}{g} \coloneqq \sum_{s \in S} f(s)g(s)$, the space $F(S)$ can be identified with the algebraic dual of $F_0(S)$. We now identify $\Lex(S)_+^*$, the dual cone of $\Lex(S)_+$. The set $F(S)_+$ denotes the functions $g \in F(S)$ for which $g(s) \geq 0$ for all $s \in S$.

\begin{lemma}\label{l:dual_cone}
$\Lex(S)_+^* \cong \{g \in F(S)_+ \colon \supp(g) \subset \{s \in S \colon s \text{ is minimal} \} \}$
\end{lemma}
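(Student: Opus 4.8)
The plan is to prove set equality under the identification of the algebraic dual of $F_0(S)$ with $F(S)$ via $\du{f}{g} = \sum_{s \in S} f(s)g(s)$ (each such sum being finite since $f$ is finitely supported). Write $G \coloneqq \{g \in F(S)_+ \colon \supp(g) \subset \{s \in S \colon s \text{ is minimal}\}\}$; I would establish $\Lex(S)_+^* = G$ by proving the two inclusions separately.

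For $G \subset \Lex(S)_+^*$, the key observation is that if $f \in \Lex(S)_+$ and $s \in S$ is minimal, then $f(s) \geq 0$: otherwise the defining condition of $\Lex(S)_+$ would produce some $t < s$, contradicting minimality of $s$. Consequently, for $g \in G$ and $f \in \Lex(S)_+$ we have $\du{f}{g} = \sum_{s \text{ minimal}} f(s)g(s) \geq 0$, since every surviving term is a product of two nonnegative reals. Hence $g \in \Lex(S)_+^*$.

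For the reverse inclusion, let $g \in \Lex(S)_+^*$. Since $e_s \in \Lex(S)_+$ for every $s \in S$ (it takes no negative values), testing against $e_s$ gives $g(s) = \du{e_s}{g} \geq 0$, so $g \in F(S)_+$. Now suppose $s \in S$ is not minimal and choose $t < s$. For each $\lambda > 0$ the element $e_t - \lambda e_s$ lies in $\Lex(S)_+$, because its only negative value occurs at $s$ and $t < s$ witnesses the required positivity. Pairing with $g$ yields $g(t) - \lambda g(s) \geq 0$, i.e.\ $\lambda g(s) \leq g(t)$ for all $\lambda > 0$; letting $\lambda \to \infty$ forces $g(s) \leq 0$, hence $g(s) = 0$. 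Thus $\supp(g)$ consists only of minimal elements, so $g \in G$.

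I do not anticipate a genuine obstacle here; the whole argument reduces to choosing the correct test elements $e_s$ and $e_t - \lambda e_s$, together with the elementary remark that positivity in $\Lex(S)$ restricts to ordinary positivity on the minimal elements of $S$. The only point requiring a little care is making sure the pairing is interpreted in the stated algebraic duality, so that $\Lex(S)_+^*$ is literally the set of $g \in F(S)$ with $\du{f}{g} \geq 0$ for all $f \in \Lex(S)_+$.
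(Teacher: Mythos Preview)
Your proof is correct and follows essentially the same approach as the paper: both directions rely on the observation that elements of $\Lex(S)_+$ are nonnegative at minimal points, and on testing $g$ against the functions $e_s$ and $e_t - \lambda e_s$ with $t<s$. If anything, your version is slightly more careful in explicitly verifying $g \in F(S)_+$ before handling the support condition.
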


\begin{proof}
Let $g$ be supported on the minimal elements of $S$. Any $f \in \Lex(S)_+$ is nonnegative on minimal elements of $S$, so $\du{f}{g} \geq 0$ for all $f \in \Lex(S)_+$, hence $g \in \Lex(S)_+^*$. Conversely, if $g(s) > 0$ for a nonminimal element $s$ and $t < s$, then $f_n \coloneqq e_t - n e_s \in \Lex(S)_+$ and $\du{f_n}{g} < 0$ for large enough $n$, so $g \notin \Lex(S)_+^*$.
\end{proof}

In particular, if $S$ contains no minimal elements, then $\Lex(S)_+^*$ is trivial.

\section{Lattices}\label{sec:fin_dim_lattices}

First we will investigate when $\Lex(S)$ is a vector lattice. Let $\bigwedge := \{s,t,m\}$ where $s,t < m$ are the only nontrivial relations. Then $\Lex(\bigwedge) \cong (\R^3, C)$ where 
$$ C := \{ (x,y,z) \colon x,y \geq 0, z \in \R\} \setminus \{(0,0,z) \colon z < 0 \}. $$
The set of upper bounds of the zero vector and $(1,-1,-1)$ equals 
$$ \{(x,y,z) \colon x \geq 1, y \geq 0, z \in \R \}, $$
and if $(x,y,z)$ is in this set, then $(x,y,z-1)$ is a smaller element in this set, so it has no least element. Therefore the zero vector and $(1,-1,-1)$ have no supremum, and so $\Lex(\bigwedge)$ is not a vector lattice. We will show that this is in some sense the only possible counterexample.

\begin{defn}
A poset $S$ is called a \emph{forest} if for each $s \in S$, the set $\langle s)$ is totally ordered. A forest is called a \emph{tree} if every two elements have a common lower bound. A \emph{root} of a tree $S$ is a minimal element of $S$.
\end{defn}

The forests are precisely the disjoint unions of trees. Note that a tree may not have a root (e.g.\ $\Z$), but if it exists it is unique, and every finite tree has a root. 

\begin{remark}
Note that a common definition of a tree in the literature requires the initial segments to be well-ordered, not just totally ordered. For finite sets, both definitions coincide.
\end{remark}

If $S$ is a forest, then for $m \in S$, the set $\langle m)$ is totally ordered, hence $S$ contains no subposet isomorphic to $\bigwedge$. If $S$ is not a forest, then for some $m \in S$, two predecessors are incomparable, hence $S$ contains a subposet isomorphic to $\bigwedge$. Therefore $S$ is a forest if and only if it contains no subposet isomorphic to $\bigwedge$.

\begin{remark}\label{rem:sup_on_supp}
In the next theorem, we will be concerned with possible suprema of a function $f \in \Lex(S)$ and $0$. We claim that any upper bound $g$ not supported on $\supp(f)$ is not a supremum. Indeed, if $s \notin \supp(f)$ and $g(s) > 0$, then $g - (g(s)/2) e_s$ is a lower upper bound of $f$ and $0$, and if $g(s) < 0$, then $g - e_s$ is a lower upper bound of $f$ and $0$. Hence it suffices to only consider functions supported on $\supp(f)$, and so we may assume that $S = \supp(f)$.
\end{remark}

\begin{theorem}
$\Lex(S)$ is a vector lattice if and only if $S$ is a forest.
\end{theorem}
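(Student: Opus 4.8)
The plan is to prove both implications separately, using the structural reductions already available. For the forward direction, suppose $\Lex(S)$ is a vector lattice. If $S$ is not a forest, then by the discussion preceding the theorem $S$ contains a subposet isomorphic to $\bigwedge$, say $\{s,t,m\}$ with $s,t < m$. I would then consider the element $f := e_s - e_t - e_m$ (modelled on the counterexample $(1,-1,-1)$ above) and show that $f \vee 0$ does not exist in $\Lex(S)$. By Remark~\ref{rem:sup_on_supp}, any supremum must be supported on $\supp(f) \subset \{s,t,m\}$, so it suffices to work inside the three-dimensional subspace spanned by $e_s, e_t, e_m$; but one must be slightly careful because the order on $\Lex(S)$ restricted to this subspace need not be exactly $\Lex(\bigwedge)$ — extra elements of $S$ below $s$, $t$, or $m$ could affect which functions in the span are positive. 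In fact an element $x e_s + y e_t + z e_m$ is positive in $\Lex(S)$ exactly when $x \geq 0$, $y \geq 0$, and ($z \geq 0$ or one of $x,y$ is $>0$ or there is some positive coordinate strictly below $m$ — but we are assuming support in $\{s,t,m\}$), so restricted to functions supported on $\{s,t,m\}$ the order does coincide with that of $\Lex(\bigwedge)$. Then the computation already carried out in the paragraph before the theorem shows the set of upper bounds of $f$ and $0$ has no least element, so $\Lex(S)$ is not a lattice, a contradiction.

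For the converse, suppose $S$ is a forest. Since a forest is a disjoint union of trees and $\Lex$ of a disjoint union is an order direct sum $\bigoplus_k \Lex(S_k)$, and a direct sum of vector lattices is a vector lattice, it suffices to treat the case where $S$ is a tree. Given $f \in \Lex(S)$, I need to exhibit $f \vee 0$. By Remark~\ref{rem:sup_on_supp} I may assume $S = \supp(f)$, so $S$ is a finite tree, hence has a root. The key structural fact to exploit is that in a tree, $\langle s)$ is totally ordered for every $s$, so the elements of $\supp(f)$ on which $f$ is negative are "dominated" in a linear fashion. I would define a candidate supremum $h$ by setting, for each $s \in S$,
$$h(s) := \begin{cases} f(s) & \text{if } f(s) \geq 0, \\ 0 & \text{if } f(s) < 0 \text{ and there exists } t < s \text{ with } f(t) > 0, \\ f(s) & \text{if } f(s) < 0 \text{ and no } t<s \text{ has } f(t)>0, \end{cases}$$
i.e.\ $h$ agrees with the "positive part" of $f$ except that negative values of $f$ that are already "protected" by a positive value below them are left as $0$, while unprotected negative values cannot be raised to $0$. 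One then checks $h \geq 0$, $h \geq f$, and that $h$ is the least such element.

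The main obstacle I expect is verifying minimality of $h$: given any $g$ with $g \geq 0$ and $g \geq f$, I must show $g \geq h$, i.e.\ $g - h \in \Lex(S)_+$. Suppose $(g-h)(s_0) < 0$ for some $s_0$; I need to produce $t < s_0$ with $(g-h)(t) > 0$. This is where the tree/forest hypothesis does the real work: because $g \geq f$ means $g - f \in \Lex(S)_+$, and because $g \geq 0$, and because every initial segment $\langle s_0)$ is a chain, one can trace a strictly decreasing sequence and use finiteness of the support (as in the proof of Lemma~\ref{l:lex_is_cone}) to locate a witness. The delicate point is the case where $f(s_0) < 0$ is "protected" (so $h(s_0) = 0$ and $(g-h)(s_0) = g(s_0) \geq 0$, no problem) versus "unprotected" (so $h(s_0) = f(s_0)$ and $(g-h)(s_0) = g(s_0) - f(s_0) \geq 0$ since $g \geq f$, again no problem) — so actually $g - h$ may turn out to be pointwise nonnegative on $\supp(f)$ after a careful case analysis, making the cone membership immediate; confirming that no other subtle case arises, and that $h$ is genuinely the pointwise-described function claimed, is the part requiring the most care.
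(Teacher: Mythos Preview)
Your forward direction is fine and matches the paper's; the extra care about the induced order on $\{s,t,m\}$ is correct and harmless.

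The converse, however, has a genuine error: your candidate $h$ is wrong. Take $S=\{r,a\}$ with $r<a$ and $f=-e_r+e_a$. Your formula gives $h(r)=f(r)=-1$ (third case: $f(r)<0$ with nothing below $r$) and $h(a)=f(a)=1$, so $h=f$; but $h(r)<0$ with no positive value below, so $h\not\geq 0$. In fact here $-f=e_r-e_a\in\Lex(S)_+$, so $f\leq 0$ and $f\vee 0=0$. For a second failure mode take $S=\{r<a<b\}$ and $f=e_r-e_a+e_b$: your $h=e_r+e_b$, but $f$ itself is already $>0$ (the root value is positive), so $f\vee 0=f$, and $h-f=e_a>0$ shows your $h$ is not least. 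The later case analysis for minimality compounds this: the step ``$(g-h)(s_0)=g(s_0)\geq 0$'' assumes $g\geq 0$ implies $g(s_0)\geq 0$ pointwise, which is false in $\Lex(S)$.

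There is also a small slip in the reduction: after replacing $S$ by $\supp(f)$, you get a finite \emph{forest}, not a finite tree, since a finite subset of a tree need not have a common lower bound (e.g.\ $\{a,b\}$ inside $\{r<a,\,r<b\}$).

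The paper's route avoids all of this by reversing the order of your reductions: first replace $S$ by the finite forest $\supp(f)$, then split that into its finite tree components $S_k$. Each $S_k$ has a root $s$, and since $S_k\subset\supp(f)$ one has $f(s)\neq 0$. If $f(s)>0$ then every $t\in S_k$ with $f(t)<0$ has $s<t$ with $f(s)>0$, so $f|_{S_k}>0$ and $f|_{S_k}\vee 0=f|_{S_k}$; if $f(s)<0$ the same argument applied to $-f$ gives $f|_{S_k}<0$ and $f|_{S_k}\vee 0=0$. No explicit formula for $h$ or minimality verification is needed. If you want a pointwise description, the correct one is: $h(s)=f(s)$ if the minimum of $\langle s]\cap\supp(f)$ (a finite chain, hence with a least element $m(s)$) satisfies $f(m(s))>0$, and $h(s)=0$ otherwise.
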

\begin{proof}
Suppose $S$ is not a forest. Let $\{s,t,m\} \subset S$ be isomorphic to $\bigwedge$, and let $f := e_s - e_t - e_m$. By Remark~\ref{rem:sup_on_supp} and the example at the beginning of this section, $f$ and $0$ have no supremum.

Conversely, suppose that $S$ is a forest, and let $f \in \Lex(S)$; we will compute $f \vee 0$. By Remark~\ref{rem:sup_on_supp} we may assume that $S = \supp(f)$ is a finite forest, which is a disjoint union of finite trees $S_k$. Then $\Lex(S)$ is a finite order direct sum of $\Lex(S_k)$, and since the order is coordinatewise, it suffices to consider a single $\Lex(S_k)$. Let $s \in \supp(f)$ be the root of $S_k$. If $f(s) > 0$, then $f > 0$, and so $f \vee 0 = f$, and if $f(s) < 0$, then $f < 0$, and so $f \vee 0 = 0$.
\end{proof}

The class $\Lex(S)$ where $S$ is a forest without minimal element is a class of vector lattices with no nontrivial positive functionals (cf.\ Lemma~\ref{l:dual_cone}); a particular example is $\Lex(\Z)$. (Another example is $L^p(0,1)$ for $0<p<1$; here one can show that positive functionals are automatically continuous and that there are no nontrivial continuous functionals.)

Each finite forest $S$ yields a finite-dimensional vector lattice $\Lex(S)$, and we will show that those are the only finite-dimensional vector lattices. If $X$ is a finite-dimensional vector lattice, then \cite[Theorem~3.9]{schaefer} shows that $X$ is a direct sum of $\R \circ M$'s, where $\circ$ denotes the lexicographic union and $M$ is a maximal ideal in $\R \circ M$. Each maximal ideal is of the same form, so this yields a recursive characterisation of finite-dimensional vector lattices. Our result below yields an alternative, non-recursive characterisation.

\begin{theorem}\label{thm:fin_dim_lex}
A ordered vector space $X$ is a finite-dimensional vector lattice if and only if it is isomorphic to $\Lex(S)$ for some finite forest $S$.
\end{theorem}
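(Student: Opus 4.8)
The forward-to-converse direction is easy: each finite forest $S$ yields a finite-dimensional vector space $\Lex(S) = F_0(S)$, and by the previous theorem it is a vector lattice. So the whole content is the forward implication: every finite-dimensional vector lattice $X$ is isomorphic to $\Lex(S)$ for some finite forest $S$. My plan is to proceed by induction on $\dim X$, using Schaefer's structure theorem \cite[Theorem~3.9]{schaefer} as the engine and translating its recursive output into the language of lexicographic cones.

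The base case $\dim X = 0$ or $1$ is trivial ($S$ a point or empty). For the inductive step, Schaefer's theorem writes $X$ as an order direct sum $X \cong \bigoplus_k (\R \circ M_k)$, where $\circ$ is the lexicographic union and each $M_k$ is a maximal ideal of $\R \circ M_k$, hence itself a finite-dimensional vector lattice of strictly smaller dimension. By the induction hypothesis each $M_k \cong \Lex(T_k)$ for a finite forest $T_k$. The key bookkeeping step is then to check that $\R \circ \Lex(T)$ — adjoining a single new element below everything — is again of the form $\Lex$ of a finite forest: concretely, if $T$ is a finite forest and $S$ is obtained from $T$ by adjoining one new element $r$ with $r < t$ for every $t \in T$, then $S$ is a finite tree with root $r$, and I claim $\Lex(S) \cong \R \circ \Lex(T)$. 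This identification is essentially immediate from the definitions: writing a function on $S$ as $(\lambda, f)$ with $\lambda = f(r)$ and $f \in F_0(T)$, the cone $\Lex(S)_+$ consists of those $(\lambda, f)$ with $\lambda > 0$, or $\lambda = 0$ and $f \in \Lex(T)_+$ — which is exactly the lexicographic union cone. Finally, since $\Lex$ turns disjoint unions of posets into order direct sums (noted in Section~\ref{s:lex_cones}), $\bigoplus_k \Lex(S_k) \cong \Lex(\bigsqcup_k S_k)$, and $\bigsqcup_k S_k$ is again a finite forest. This closes the induction.

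The main obstacle I anticipate is not any single hard computation but rather pinning down precisely what Schaefer's lexicographic union $\R \circ M$ is and verifying that my poset-theoretic "adjoin a root" operation corresponds to it exactly — in particular that the maximal ideal $M$ sitting inside $\R \circ M$ matches the sub-forest $T = S \setminus \{r\}$ of $S$, i.e.\ that order ideals of $\Lex(S)$ correspond to down-closed sub-forests. One should also double-check the edge case where $X$ has nontrivial but low-dimensional factors and make sure the recursion in \cite{schaefer} genuinely decreases dimension at each stage (it does, since $M$ is a \emph{maximal} ideal, so $\dim M = \dim(\R \circ M) - 1$). Apart from that, everything is a matter of carefully matching two descriptions of the same object, and the real mathematical work has already been done in the preceding theorem and in Schaefer's result.
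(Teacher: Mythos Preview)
Your proposal is correct and follows essentially the same approach as the paper's own proof: induction on $\dim X$, invoking \cite[Theorem~3.9]{schaefer} to decompose $X$ as $\bigoplus_k \R \circ M_k$, applying the induction hypothesis to each $M_k$, and then observing that $\R \circ \Lex(T) \cong \Lex(T')$ where $T'$ is $T$ with a new root adjoined, so that the disjoint union of the resulting trees gives the desired forest. Your verification of the identification $\R \circ \Lex(T) \cong \Lex(S)$ is in fact more explicit than the paper's, which simply asserts it ``by the definition of lexicographic union''.
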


\begin{proof}
If $S$ is a finite forest, then the above theorem shows that $\Lex(S)$ is a vector lattice. (Alternatively, one can prove this directly: let $s_1, \ldots, s_n$ be the roots of $S$. Let $M_k := \Lex(\{ s \in S \colon s > s_k\})$, then $S \cong \oplus_{k=1}^n \, \R \circ M_k$. Now the ordered vector space $M_k$ is of smaller dimension, so it follows by induction on the dimension.)

Conversely, suppose $X$ is a finite-dimensional vector lattice. By induction on $\dim(X)$ we will show that it is isomorphic to $\Lex(S)$ for some finite forest $S$. The result is obvious if $\dim(X)=1$. Suppose it holds for all dimensions $1 \leq k \leq d$ and suppose $\dim(X) = d+1$, then $X \cong \oplus_{k=1}^n \, \R \circ M_k$ by \cite[Theorem~3.9]{schaefer}. The induction hypothesis now implies that $M_k \cong \Lex(S_k)$ for some finite forest $S_k$. Let $S'_k$ be the forest $S_k$ adjoined with a new element that is below every element in $S_k$, then $\R \circ M_k \cong \Lex(S'_k)$ by the definition of lexicographic union. Hence $X \cong  \oplus_{k=1}^n \Lex(S'_k) \cong \Lex(S)$ where $S$ is the forest defined by the disjoint union of the trees $S'_k$.
\end{proof}

\section{Projective tensor product}\label{s:proj_tensor_prod}

If $X$ and $Y$ are ordered vector spaces, then the \emph{projective tensor product} of $X$ and $Y$ is the vector space $X \otimes Y$ equipped with the \emph{projective cone} 
$$
K_p = K_p(X,Y) \coloneqq \left\{ \sum_{i=1}^n x_i \otimes y_i \colon n \in \N, x_i \in X_+, \in Y_+ \right\}.
$$
It is obvious that $K_p$ is a wedge.

Our next goal is to show that $K_p$ is actually a cone. Let $X$ be an ordered vector space. A set $G \subset X_+$ is a \emph{generating set} for $X_+$ if every $x \in X_+$ can be written as a positive linear combination of elements of $G$. A set of positive elements such that its positive linear span contains a generating set is obviously generating. Suppose $G$ is a generating set for $X_+$ and $H$ is a generating set for $Y_+$. Then if $u = \sum_{i=1}^n x_i \otimes y_i \in K_p \subset X \otimes Y$, then there exist $\lambda_{ij} \geq 0$, $g_{ij} \in G$ and $h_{ij} \in H$ such that $x_i = \sum_{j=1}^{n_i} \lambda_{ij} g_j$ and $y_i = \sum_{k=1}^{m_i} \mu_{ik} h_k$. Therefore
$$ u = \sum_{i=1}^n x_i \otimes y_i = \sum_{i=1}^n \sum_{j=1}^{n_i} \sum_{k=1}^{m_i} \lambda_{ij} \mu_{ik} (g_{ij} \otimes h_{ik}), $$
and so $G \otimes H := \{g \otimes h \colon g \in G, h \in H\}$ is a generating set for $K_p(X,Y)$.

\begin{lemma}\label{l:generating_set_lex}
The set
$$ G := \{ e_s - \lambda e_t \colon \lambda > 0, s < t \} \cup \{e_s \colon s \in S \} $$
is a generating set for $\Lex(S)_+$.
\end{lemma}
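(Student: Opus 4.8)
The plan is to prove directly that every $f \in \Lex(S)_+$ can be written as a nonnegative linear combination of elements of $G$, which is precisely the assertion that $G$ generates $\Lex(S)_+$. I would argue by induction on $|\supp(f)|$. Whenever $f$ takes no negative value (in particular in the base case $f=0$), one simply writes $f = \sum_{s \in \supp(f)} f(s) e_s$, a nonnegative combination of the vectors $e_s \in G$.

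For the inductive step, suppose $f(s) < 0$ for some $s \in S$. Since $f \in \Lex(S)_+$, there is $t < s$ (so $t \neq s$) with $f(t) > 0$. The idea is to cancel the negative value at $s$ using the element $e_t - \lambda e_s \in G$ while consuming only \emph{part} of the positive value at $t$. Concretely, set $\lambda := 1 - f(s)/f(t)$, which satisfies $\lambda > -f(s)/f(t) > 0$, and put $\alpha := -f(s)/\lambda$; a direct check gives $0 < \alpha < f(t)$. Let $f' := f - \alpha(e_t - \lambda e_s)$. Then $f'(s) = f(s) + \alpha\lambda = 0$, $f'(t) = f(t) - \alpha > 0$, and $f'$ agrees with $f$ at every other point, so $\supp(f') = \supp(f) \setminus \{s\}$ is strictly smaller.

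It remains to check that $f' \in \Lex(S)_+$. Suppose $u \neq s$ has $f'(u) = f(u) < 0$, and pick $v < u$ with $f(v) > 0$; then $v \neq s$ (since $f(s) < 0$), and whether or not $v = t$ we still have $f'(v) > 0$, because the only coordinate whose value we decreased is $t$, and it remains at the positive value $f(t) - \alpha$. Hence $f'$ satisfies the defining condition of $\Lex(S)_+$, so by the induction hypothesis $f'$ is a nonnegative combination of elements of $G$; adding back $\alpha(e_t - \lambda e_s)$ with $\alpha > 0$ and $e_t - \lambda e_s \in G$ yields the same for $f$.

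The main obstacle, and the only genuinely nonobvious point, is exactly this last verification. A naive attempt to zero out both $f(s)$ and $f(t)$ at once (taking $\alpha = f(t)$) can destroy the positive predecessor that some \emph{other} negative coordinate of $f$ was relying on, so the resulting function need not lie in $\Lex(S)_+$; consuming $t$ only partially circumvents this, at the cost of removing merely one point from the support at each step instead of two.
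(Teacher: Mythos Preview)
Your argument is correct, and the induction is cleanly set up: the only delicate point is that zeroing out the negative coordinate at $s$ must not destroy positivity of $f'$, and your choice of $\alpha<f(t)$ guarantees that the witness $v$ for any other negative coordinate still has $f'(v)>0$.

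The paper's proof also proceeds by induction on $|\supp(f)|$ but organises the reduction differently. Rather than selecting a \emph{negative} coordinate $s$ and cancelling it against a single predecessor, the paper selects some $s$ with $f(s)>0$ and splits $f=f|_{[s\rangle}+f|_{[s\rangle^c}$. The restriction $f|_{[s\rangle}$ has $s$ as the least element of its support, so it is visibly a nonnegative combination of $e_s$ and elements $e_s-\lambda e_t$ with $t>s$; the complementary piece $f|_{[s\rangle^c}$ is shown to lie in $\Lex(S)_+$ and has strictly smaller support. Thus the paper peels off an entire upward set at once, potentially removing many coordinates per step, whereas you remove exactly one negative coordinate per step. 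Your route is arguably more elementary (only one generator is subtracted at a time) and makes the role of the ``partial consumption'' of $f(t)$ very transparent; the paper's route has the minor advantage that no careful tuning of $\lambda$ and $\alpha$ is needed, since the decomposition into $[s\rangle$ and its complement automatically preserves positivity.
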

\begin{proof}
We will show that every $f \in \Lex(S)_+$ can be written as a positive linear combination of elements of $G$ by induction on $|\supp(f)|$. If $|\supp(f)| = 1$ then the result is obvious. Suppose it holds for every $f$ with $ 1 \leq |\supp(f)| \leq n$, and take an $f \in \Lex(S)_+$ with $|\supp(f)| = n+1$. Then there is an $s \in S$ with $f(s) > 0$. Write $f = f|_{[s\rangle} + f|_{[s\rangle^c}$. Clearly $f|_{[s\rangle}$ is positive since $s$ is the smallest element in its support and $f(s) > 0$. To show that $f|_{[s\rangle^c}$ is positive, let $t \in [s\rangle^c$ with $f(t) < 0$, then there is a $r < t$ with $f(r) >0$. If $r > s$, then $t > r > s$, contradicting $t \in [s\rangle^c$, so $r \in [s\rangle^c$, and therefore $f|_{[s\rangle^c} \geq 0$.

Let $T^- := \{ t \in S \colon t > s, f(t) < 0\}$ and $T^+ := \{ t \in S \colon t > s, f(t) > 0\}$. If $T^- = \emptyset$ then 
$$ f|_{[s\rangle} = \sum_{t \in T^+} f(t) e_t + f(s)e_s, $$
and if $T^- \not= \emptyset$ then 
$$ f|_{[s\rangle} = \sum_{t \in T^+} f(t) e_t + \sum_{t \in T^-} \left( \frac{f(s)}{|T^-|} e_s - |f(t)| e_t \right) . $$
In both cases $f|_{[s\rangle}$ can be written as a positive linear combination of elements of $G$, and the same holds for $f|_{[s\rangle^c}$ by the induction hypothesis. Hence $f = f|_{[s\rangle} + f|_{[s\rangle^c}$ can be written as a positive linear combination of elements of $G$.

\comment{Let $0 \not= f \in Lex(S)_+$, then there is an $s \in S$ with $f(s) > 0$. Write $f = f|_{[s\rangle} + f|_{[s\rangle^c}$. Clearly $f|_{[s\rangle}$ is positive since $s$ is the smallest element in its support and $f(s) > 0$. To show that $f|_{[s\rangle^c}$ is positive, let $t \in [s\rangle^c$ with $f(t) < 0$, then there is a $r < t$ with $f(r) >0$. If $r > s$, then $t > r > s$, contradicting $t \in [s\rangle^c$, so $r \in [s\rangle^c$, and therefore $f|_{[s\rangle^c} \geq 0$. It is clear that $f|_{[s\rangle}$ can be written as a positive linear combination of elements of $G$. Moreover, $f|_{[s\rangle^c}$ can be decomposed like $f$ and has smaller support than $f$, so the result follows by induction on the size of the support of $f$.}
\end{proof}

Using this lemma we can show that the projective tensor product behaves well with respect to lexicographic cones.

\begin{prop}\label{p:product_lexico}
Let $S$ and $T$ be posets. Then there is a natural linear isomorphism $\Lex(S) \otimes \Lex(T) \cong \Lex(S \times T)$, and under this isomorphism, $K_p \subset \Lex(S) \otimes \Lex(T)$ satisfies $K_p \cong \Lex(S \times T)_+$.
\end{prop}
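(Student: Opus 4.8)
The plan is to separate the linear isomorphism from the identification of cones. For the linear part, $\{e_s\}_{s\in S}$, $\{e_t\}_{t\in T}$ and $\{e_{(s,t)}\}_{(s,t)\in S\times T}$ are bases, so $e_s\otimes e_t\mapsto e_{(s,t)}$ extends to a linear isomorphism $\Phi\colon\Lex(S)\otimes\Lex(T)\to\Lex(S\times T)$; explicitly $\Phi(f\otimes g)=\big((s,t)\mapsto f(s)g(t)\big)$, which makes the naturality transparent. Recall that $S\times T$ carries the product order, so what must be shown is $\Phi(K_p)=\Lex(S\times T)_+$. I would prove this by two inclusions, in each case reducing to a generating set via Lemma~\ref{l:generating_set_lex} and the discussion preceding it.

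For $\Phi(K_p)\subseteq\Lex(S\times T)_+$: since $G_S\otimes G_T$ generates $K_p$, where $G_S,G_T$ are the generating sets of Lemma~\ref{l:generating_set_lex}, and $\Lex(S\times T)_+$ is a wedge, it suffices to verify $\Phi(g\otimes h)\in\Lex(S\times T)_+$ for $g\in G_S$, $h\in G_T$. Writing $g=e_s-\lambda e_{s'}$, $h=e_t-\mu e_{t'}$ with $\lambda,\mu\ge0$ (and $\lambda>0\Rightarrow s<s'$, $\mu>0\Rightarrow t<t'$, so that the basis vectors occur as the cases $\lambda=0$, resp.\ $\mu=0$), one finds
$$\Phi(g\otimes h)=e_{(s,t)}-\mu e_{(s,t')}-\lambda e_{(s',t)}+\lambda\mu e_{(s',t')}.$$
The coefficient at $(s,t)$ is $1$, that at $(s',t')$ is $\lambda\mu\ge0$, and the only possibly negative coefficients are $-\mu$ at $(s,t')$ (which forces $t<t'$, i.e.\ $(s,t)<(s,t')$) and $-\lambda$ at $(s',t)$ (which forces $s<s'$, i.e.\ $(s,t)<(s',t)$); hence $\Phi(g\otimes h)\in\Lex(S\times T)_+$.

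For $\Lex(S\times T)_+\subseteq\Phi(K_p)$: applying Lemma~\ref{l:generating_set_lex} to the poset $S\times T$ and using that $\Phi(K_p)$ is a wedge, it suffices to place each generator $e_{(s,t)}$ and each $e_{(s,t)}-\nu e_{(s',t')}$ (with $\nu>0$ and $(s,t)<(s',t')$) in $\Phi(K_p)$. We have $e_{(s,t)}=\Phi(e_s\otimes e_t)$. For the others: if $s=s'$ then $t<t'$ and $e_{(s,t)}-\nu e_{(s',t')}=\Phi\big(e_s\otimes(e_t-\nu e_{t'})\big)$; if $t=t'$ then symmetrically $\Phi\big((e_s-\nu e_{s'})\otimes e_t\big)$; and if $s<s'$ and $t<t'$, then writing $\nu=\a\beta$ with $\a,\beta>0$ and using the telescoping identity
$$(e_s-\a e_{s'})\otimes e_t+\a e_{s'}\otimes(e_t-\beta e_{t'})=e_s\otimes e_t-\a\beta e_{s'}\otimes e_{t'}$$
gives $e_{(s,t)}-\nu e_{(s',t')}=\Phi\big((e_s-\a e_{s'})\otimes e_t+\a e_{s'}\otimes(e_t-\beta e_{t'})\big)$, a sum of tensors of positive elements of $\Lex(S)$ and $\Lex(T)$ (note $e_s-\a e_{s'}\in\Lex(S)_+$ since $s<s'$, and $e_t-\beta e_{t'}\in\Lex(T)_+$ since $t<t'$). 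This last case — spotting the telescoping decomposition of $e_{(s,t)}-\nu e_{(s',t')}$ when both coordinates drop simultaneously — is the only real obstacle; the remaining steps are routine bookkeeping with the product order and the two generating sets.
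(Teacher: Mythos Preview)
Your proof is correct and follows essentially the same route as the paper: establish the linear isomorphism on bases, use Lemma~\ref{l:generating_set_lex} on both sides to reduce each inclusion to generators, check that all of $G_S\otimes G_T$ lands in $\Lex(S\times T)_+$, and then recover the generator $e_{(s,t)}-\nu e_{(s',t')}$ via a two-term telescoping decomposition. The paper's decomposition is the special case $\a=1$, $\beta=\nu$ of yours, namely $g_{s_1,t_1}-\alpha g_{s_2,t_2}=(g_{s_1,t_1}-g_{s_2,t_1})+(g_{s_2,t_1}-\alpha g_{s_2,t_2})$.
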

\begin{proof}
Let $\{e_s\}_{s \in I}$, $\{f_t\}_{t \in T}$ and $\{g_{s,t}\}_{(s,t) \in S \times T}$ be the natural bases of $\Lex(S)$, $\Lex(T)$ and $\Lex(S \times T)$; then $e_s \otimes f_t \mapsto g_{s,t}$ induces the natural linear isomorphism $\Lex(S) \otimes \Lex(T) \cong \Lex(S \times T)$.

By Lemma~\ref{l:generating_set_lex},
$$G = \{e_{s_1} - \lambda e_{s_2}: \lambda > 0, s_1 < s_2 \} \cup \{e_s: s \in S\}$$
  is a generating set for $\Lex(S)$, 
  $$H = \{f_{t_1} - \lambda f_{t_2}: \lambda > 0, t_1 < t_2 \} \cup \{f_t: t \in T\}$$
  is a generating set for $\Lex(T)$, and a generating set of $\Lex(S \times T)$ is given by
  $$ \{g_{s_1,t_1} - \alpha g_{s_2,t_2}: \alpha > 0, (s_1, t_1) < (s_2, t_2) \} \cup \{g_{s,t} : (s,t) \in S \times T \}. $$
     We will compute $G \otimes H$ in $\Lex(S \times T)$. Let $s \in S$, $t \in T$, $s_1 < s_2$, $t_1 < t_2$ and $\lambda, \mu > 0$, then
\begin{align*}
(e_{s_1} - \lambda e_{s_2}) \otimes (f_{t_1} - \mu f_{t_2}) &\cong g_{s_1,t_1} - \lambda g_{s_2,t_1} - \mu g_{s_1,t_2} + \lambda \mu g_{s_2,t_2} \\
(e_{s_2} - \lambda e_{s_2}) \otimes f_t &\cong g_{s_1,t} - \lambda g_{s_2,t} \\
e_s \otimes (f_{t_1} - \mu f_{t_2}) &\cong g_{s,t_1} - \mu g_{s,t_2} \\
e_s \otimes f_t &\cong g_{s,t}.
\end{align*}
All elements on the right-hand side are positive in $\Lex(S \times T)$. It now suffices to show that the positive span of these elements contains a generating set for $\Lex(S \times T)$. For that, we have to show that for $s_1 < s_2$ and $t_1 < t_2$ (the case where $s_1 = s_2$ or $t_1 = t_2$ is already covered by the second and third line above) and $\alpha > 0$, $g_{s_1,t_1} - \alpha g_{s_2,t_2}$ is a positive linear combination of these elements. Indeed,
$$ g_{s_1,t_1} - \alpha g_{s_2,t_2} = (g_{s_1,t_1} - g_{s_2,t_1}) + (g_{s_2,t_1} - \alpha g_{s_2,t_2} ). \eqno\qedhere $$
\end{proof}

For the next theorem we need a proposition about maximal cones in finite-dimensional vector spaces, and for this we need some preparations.

\begin{lemma}\label{lem:max_cones}
Let $C$ be a cone in a vector space $X$. Then $C$ is contained in a maximal cone. Moreover, the following are equivalent:
\item[(i)] $C$ is maximal;
\item[(ii)] $C \cup -C = X$;
\item[(iii)] $(X,C)$ is totally ordered.
\end{lemma}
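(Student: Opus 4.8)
The plan is to prove the two parts separately: first the existence of a maximal cone containing $C$, then the equivalence (i)$\Leftrightarrow$(ii)$\Leftrightarrow$(iii).

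For the existence of a maximal cone containing $C$, I would apply Zorn's lemma to the collection $\mathcal{C}$ of all cones in $X$ containing $C$, ordered by inclusion. This is nonempty since $C \in \mathcal{C}$. The key verification is that the union of a chain of cones is again a cone: it is clearly a wedge (given two elements of the union, both lie in a common member of the chain, and sums and positive scalar multiples stay there), and the condition $\bigcup_\alpha C_\alpha \cap -\bigcup_\alpha C_\alpha = \{0\}$ holds because if $x$ and $-x$ both lie in the union, then $x \in C_\alpha$ and $-x \in C_\beta$ for some $\alpha, \beta$, and since the chain is totally ordered one of these is contained in the other, say $C_\alpha \subset C_\beta$, so $\pm x \in C_\beta$, forcing $x = 0$. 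Hence every chain has an upper bound, and Zorn gives a maximal element.

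For the equivalences, the implication (ii)$\Rightarrow$(iii) is immediate: if $C \cup -C = X$ then for any $x, y$ either $y - x \in C$ or $x - y \in C$, i.e.\ $x \leq y$ or $y \leq x$, so the order is total. For (iii)$\Rightarrow$(ii), if $(X,C)$ is totally ordered then comparing any $x \in X$ with $0$ gives $x \in C$ or $-x \in C$, so $C \cup -C = X$. For (ii)$\Rightarrow$(i): suppose $C \cup -C = X$ and $C'$ is a cone with $C \subsetneq C'$; pick $x \in C' \setminus C$. Then $-x \notin -C \cup$ wait --- since $C \cup -C = X$ and $x \notin C$, we must have $x \in -C$, so $-x \in C \subset C'$; then $\pm x \in C'$ with $x \neq 0$ (as $x \notin C \ni 0$), contradicting that $C'$ is a cone. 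So $C$ is maximal. The only slightly delicate implication is (i)$\Rightarrow$(ii): I would argue contrapositively. Suppose $C \cup -C \neq X$ and pick $x \in X$ with $x \notin C$ and $-x \notin C$. The natural candidate for a strictly larger cone is $C' \coloneqq C + \mathbb{R}_{\geq 0}\, x = \{c + \lambda x \colon c \in C, \lambda \geq 0\}$. This is clearly a wedge containing $C$ and containing $x$, so it is strictly larger; the main obstacle is showing $C'$ is a cone, i.e.\ $C' \cap -C' = \{0\}$.

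For that last point, suppose $c + \lambda x = -(c' + \mu x)$ with $c, c' \in C$ and $\lambda, \mu \geq 0$, so $(\lambda + \mu) x = -(c + c') \in -C$. If $\lambda + \mu > 0$ this gives $x \in -C$, contradicting the choice of $x$. Hence $\lambda = \mu = 0$, so $c = -c' \in C \cap -C = \{0\}$, giving $c + \lambda x = 0$. Thus $C'$ is a cone strictly containing $C$, contradicting maximality of $C$; this establishes (i)$\Rightarrow$(ii) and completes the cycle.
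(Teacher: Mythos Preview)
Your proof is correct and follows essentially the same approach as the paper: Zorn's lemma for the existence part, the same construction $C' = C + \mathbb{R}_{\geq 0}\,x$ with the identical algebraic verification for (i)$\Rightarrow$(ii), and the same trivial observations for (ii)$\Leftrightarrow$(iii) and (ii)$\Rightarrow$(i). You have simply filled in more detail where the paper says ``easily follows from Zorn's Lemma'' and ``obvious''.
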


\begin{proof}
The first statement easily follows from Zorn's Lemma. 

To prove $(i) \Rightarrow (ii)$, suppose $x \in X \backslash (C \cup -C)$. Towards showing that the wedge generated by $C$ and $x$ is a cone, let $\lambda, \mu \geq 0$ and $y,z \in C$ be such that $\lambda x + y = -\mu x - z$. Then $(\lambda+\mu)x = -y-z \in -C$, which is only possible if $\lambda = \mu = 0$ and $y = z = 0$. Hence the wedge generated by $x$ and $C$ is a cone, contradicting the maximality of $C$.

$(ii) \Rightarrow (i)$ is obvious.

To prove $(ii) \Leftrightarrow (iii)$, note that $(X,C)$ being totally ordered means that every element of $X$ is either positive or negative, i.e., $C \cup -C = X$.
\end{proof}

\begin{prop}\label{p:lex_maximal}
Let $X$ be an ordered vector space of dimension $d$. Then $X_+$ is contained in a maximal cone which is isomorphic to $\R^d_{lex}$.
\end{prop}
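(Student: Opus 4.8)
The plan is to build the chain of subspaces that a lexicographic order needs, one dimension at a time, using Lemma~\ref{lem:max_cones} to control what "maximal" means. First I would observe that, by Lemma~\ref{lem:max_cones}, it suffices to produce a total order on $X$ extending the given one, i.e. a cone $C \supseteq X_+$ with $C \cup -C = X$, and then show that every $d$-dimensional totally ordered vector space is isomorphic to $\R^d_{lex}$. The existence of a maximal cone containing $X_+$ is already given by the first sentence of Lemma~\ref{lem:max_cones}, so the real content is the identification of maximal cones in finite dimensions with $\R^d_{lex}$.

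For that identification I would argue by induction on $d$. The case $d=1$ is immediate. For the inductive step, let $C$ be a maximal (equivalently, total) cone on $X$. The key step is to locate a hyperplane $H \subset X$ such that $C \cap H$ is again a maximal cone on $H$ and such that $C$ is obtained from $C\cap H$ by the lexicographic recipe: $C = (C \setminus H)_{\text{one side}} \cup (C\cap H)$. Concretely, I expect $H$ to arise as $H = C^\circ \cap -C^\circ$ is too crude in general; instead the right object is an "order-dense" or "large" face. One clean way: pick any linear functional $\varphi$ that is nonnegative on $C$ (if $C^* \neq \{0\}$) and let $H = \ker\varphi$; then $C \cap H$ is a total cone on the hyperplane $H$, the induction hypothesis gives $H \cong \R^{d-1}_{lex}$ as ordered spaces, and the decomposition $x \mapsto (\varphi(x), \text{component in } H)$ exhibits $(X,C) \cong \R \circ \R^{d-1}_{lex} = \R^d_{lex}$, where $\R\circ$ denotes the lexicographic union used in Section~\ref{sec:fin_dim_lattices}. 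If instead $C^* = \{0\}$, I would pass to a quotient: $X / (C \cap -C)$ — but $C$ is a cone so $C \cap -C = \{0\}$; the genuinely delicate case is when $C^* = \{0\}$ yet $\dim X > 0$, which cannot happen in finite dimensions because a closed cone in $\R^d$ always has nontrivial dual, but $C$ need not be closed. So the actual subtlety is handled by replacing $C$ with a suitable strictly larger object or by choosing $H$ to be a maximal proper face of $C$ rather than a kernel of a functional.

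Therefore the cleanest route, and the one I would carry out, is: take $H$ to be a linear hyperplane that is maximal among linear subspaces on which $C$ restricts to... no — rather, take a nonzero $x_0 \in C$ and consider the quotient $X / \R x_0$ is wrong too since $x_0$ need not generate a face. Let me commit to the following: since $(X,C)$ is totally ordered and finite-dimensional, the set of linear subspaces $V$ with $V = (C\cap V) \cup -(C\cap V)$ and $C \cap V$ "order convex" in $X$ (an order ideal) is a chain under inclusion — this is a standard fact for totally ordered vector spaces — pick a maximal proper such ideal $V$, which then has dimension $d-1$; write $X = \R x_1 \oplus V$ with $x_1 \in C$; then positivity in $C$ is governed first by the $x_1$-coordinate and, when that vanishes, by membership in $C\cap V$; apply the induction hypothesis to $(V, C\cap V)$.

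**Main obstacle.** The hard part will be establishing that a maximal proper order ideal of a totally ordered finite-dimensional vector space has codimension exactly one and that $C$ decomposes lexicographically across it — i.e. that the order on $X$ really is "first coordinate, then the order on $V$" with no pathological interaction. This is exactly the place where totality of the order (Lemma~\ref{lem:max_cones}(iii)) is essential, and where one must rule out, in the non-Archimedean setting, behaviour that a closed-cone argument would exclude automatically.
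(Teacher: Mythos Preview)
Your reduction is right: by Lemma~\ref{lem:max_cones} one obtains a maximal, hence total, cone $C\supseteq X_+$, and the content is the identification of $(X,C)$ with $\R^d_{lex}$. The paper dispatches this in one stroke: a totally ordered space is trivially a vector lattice, so Theorem~\ref{thm:fin_dim_lex} gives $(X,C)\cong\Lex(S)$ for some finite forest $S$, and totality of the order forces $S$ to have no incomparable elements, i.e.\ $S=\{1,\dots,d\}$ with its usual order, whence $\Lex(S)=\R^d_{lex}$. You never invoke Theorem~\ref{thm:fin_dim_lex}, which was set up precisely so that this step becomes immediate.

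Your inductive plan is also viable, and the paper in fact sketches it as an alternative route. But the variant you abandon is the one that works cleanly: your worry that $C^*$ might vanish because $C$ is not closed is misplaced in finite dimensions, since a proper wedge in $\R^d$ is never dense (density would force it to equal $\R^d$, contradicting $C\cap -C=\{0\}$), and separation then yields a nonzero functional $\varphi\ge 0$ on $C$; take $H=\ker\varphi$ and induct. This is exactly the content of the two lemmas and the corollary the paper records immediately after the proposition. Your final route through a maximal proper order ideal would also succeed---the obstacle you flag is resolved by the standard fact that an Archimedean totally ordered real vector space has dimension at most one, which forces the maximal ideal to have codimension one and gives the lexicographic splitting $X\cong\R\circ V$---but that amounts to reproving \cite[Theorem~3.9]{schaefer} in the totally ordered case rather than using the classification already established in Section~\ref{sec:fin_dim_lattices}.
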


\begin{proof}
By Lemma~\ref{lem:max_cones}, $X_+$ is contained in a maximal cone $C$ which induces a total order. A totally ordered vector space is obviously a vector lattice, so $C \cong \Lex(S)_+$ by Theorem~\ref{thm:fin_dim_lex}. The total order forces $S$ to have no incomparable elements, hence $S$ must be equal to $\{1,\ldots, d\}$ with the usual order, and so $\Lex(S)_+ \cong \R^d_{lex}$.
\end{proof}

An alternative proof of Proposition~\ref{p:lex_maximal}, which does not rely on the results from Section~\ref{sec:fin_dim_lattices}, follows from the following results, whose easy proof is left to the reader.

\begin{lemma}
If $W$ is a wedge that is not dense in a locally convex space $X$, then there exists an $x^* \in X^*$ such that the closed half-space $\{x \in X \colon \du{x}{x^*} \geq 0\}$ contains $W$.
\end{lemma}

\begin{lemma}
If $W$ is a wedge contained in a finite-dimensional space $X$, then $W$ is dense in $X$ if and only if $W = X$.
\end{lemma}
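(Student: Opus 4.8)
The backward implication is trivial, so the plan is to show that a dense wedge $W$ in a $d$-dimensional space $X$ must equal $X$. First I would upgrade density to a spanning statement: $\Sp(W)$ is a linear subspace of a finite-dimensional space, hence closed, and it contains the dense set $W$, so $\Sp(W) = X$. Thus $W$ contains a basis $w_1, \dots, w_d$ of $X$. Moreover $W$ is nonempty (being dense), so as a wedge it is convex and contains $0$; hence it contains the simplex with vertices $0, w_1, \dots, w_d$. Since the $w_i$ form a basis this simplex is $d$-dimensional, so it has nonempty interior, and therefore $W$ has an interior point $p$, say with $B(p, \eps) \subseteq W$ for some $\eps > 0$.

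The second step converts this interior point into the conclusion by a line-segment argument. Fix an arbitrary $x \in X$. Put $x' := 2x - p$ and, using density, pick a sequence $q_n \in W$ with $q_n \to x'$. Then $2x - q_n \to 2x - x' = p$, so $2x - q_n \in B(p, \eps) \subseteq W$ once $n$ is large; for such $n$, convexity of $W$ yields $x = \tfrac{1}{2} q_n + \tfrac{1}{2}(2x - q_n) \in W$. As $x$ was arbitrary, $W = X$.

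I do not anticipate a genuine obstacle: this is essentially the classical line-segment principle for convex sets, and the wedge hypothesis enters only to guarantee $0 \in W$, which is exactly what lets topological density produce an interior point rather than merely a full affine hull. (Alternatively one could quote the standard facts that a convex set with full affine hull has nonempty interior and that $\operatorname{int}(\overline{W}) = \operatorname{int}(W)$ for any convex set with nonempty interior, but writing out the segment argument keeps the proof self-contained.)
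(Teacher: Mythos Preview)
Your argument is correct. The paper does not actually supply a proof of this lemma --- it is one of two results ``whose easy proof is left to the reader'' --- so there is nothing to compare against. Your two-step route (density $\Rightarrow$ $W$ spans $X$ $\Rightarrow$ $W$ contains a full-dimensional simplex $\Rightarrow$ $W$ has interior, followed by the line-segment trick) is a clean and self-contained way to fill the gap, and the parenthetical alternative via $\operatorname{int}(\overline{W})=\operatorname{int}(W)$ for convex sets with nonempty interior would work equally well.
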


\begin{corol}
If $C$ is a cone in a finite-dimensional vector space $X$, then it is contained in a closed half-space.
\end{corol}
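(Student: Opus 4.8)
The plan is simply to chain the two lemmas just stated. First I would dispose of the degenerate case $\dim X = 0$, where $C = X = \{0\}$ and there is nothing to prove; assume henceforth $\dim X \geq 1$.

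The first step is to observe that a cone is always a proper wedge: if we had $C = X$, then also $-C = X$, so $C \cap (-C) = X \neq \{0\}$, contradicting the defining property $C \cap (-C) = \{0\}$ of a cone. Hence $C \subsetneq X$. The second step invokes the lemma on wedges in finite-dimensional spaces in its contrapositive form: since $C$ is a wedge with $C \neq X$, it is not dense in $X$. The third and final step invokes the separation lemma: a finite-dimensional vector space is locally convex, and $C$ is a non-dense wedge, so there exists $x^* \in X^*$ with $C \subset \{x \in X \colon \du{x}{x^*} \geq 0\}$, which is the desired closed half-space.

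As for difficulty, the corollary itself is a one-line deduction and presents no real obstacle once the two lemmas are in hand; all of the genuine content (a Hahn--Banach style separation, together with the coincidence of density and equality for wedges in finite dimensions) has been packaged into those lemmas, whose proofs the author leaves to the reader. The only point that would require a moment's care is ensuring the separating functional is nonzero, so that $\{x \colon \du{x}{x^*} \geq 0\}$ is a genuine half-space rather than all of $X$; this comes for free from strictly separating a point outside the closed wedge $\overline{C}$, since $0 \in \overline{C}$ forces the separating value to be nonnegative while the excluded point lies strictly above it.
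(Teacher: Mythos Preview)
Your proof is correct and follows exactly the approach the paper intends: the corollary is stated immediately after the two lemmas precisely so that one chains them as you do, and the paper leaves all three proofs to the reader. Your handling of the degenerate case and the remark on nonzeroness of $x^*$ are more careful than the paper, but the argument is the same.
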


Note that it is in general not true that a cone is contained in a half-space, since the half-space would induce a positive functional and some cones (even lattice cones) do not admit any positive functionals, as shown before.

The alternative proof of Proposition~\ref{p:lex_maximal} now follows easily by induction on the dimension and the above corollary.

\begin{theorem}\label{t:tensor_product_cone}
Let $X$ and $Y$ be ordered vector spaces. Then $K_p(X,Y)$ is a cone.
\end{theorem}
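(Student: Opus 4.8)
The plan is to reduce to finite dimensions and then assemble Propositions~\ref{p:lex_maximal} and~\ref{p:product_lexico}. Since $K_p(X,Y)$ is already known to be a wedge, it suffices to show that $u \in K_p(X,Y) \cap \bigl(-K_p(X,Y)\bigr)$ forces $u = 0$.

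First I would pass to finite-dimensional subspaces. Given such a $u$, write $u = \sum_{i=1}^n x_i \otimes y_i$ and $-u = \sum_{j=1}^m x_j' \otimes y_j'$ with all $x_i, x_j' \in X_+$ and all $y_i, y_j' \in Y_+$. Put $X_0 \coloneqq \Sp\{x_i, x_j'\} \subset X$ and $Y_0 \coloneqq \Sp\{y_i, y_j'\} \subset Y$, finite-dimensional subspaces ordered by the induced cones $X_0 \cap X_+$ and $Y_0 \cap Y_+$. Then all the $x_i, x_j'$ are positive in $X_0$ and all the $y_i, y_j'$ are positive in $Y_0$, so $\pm u \in K_p(X_0, Y_0)$; and since the natural map $X_0 \otimes Y_0 \to X \otimes Y$ is injective, showing $u = 0$ in $X_0 \otimes Y_0$ suffices. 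Hence we may assume $X$ and $Y$ are finite-dimensional, say of dimensions $d$ and $e$.

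Next I would enlarge the cones. By Proposition~\ref{p:lex_maximal}, $X_+$ is contained in a cone $C_X$ with $(X, C_X) \cong \R^d_{lex} = \Lex(S)$, where $S = \{1,\dots,d\}$ is linearly ordered, and likewise $Y_+ \subset C_Y$ with $(Y, C_Y) \cong \Lex(T)$, $T = \{1,\dots,e\}$ linearly ordered. Because the projective cone is built from sums of tensors of positive elements, enlarging the cones enlarges $K_p$, so $K_p(X,Y) \subset K_p\bigl((X,C_X),(Y,C_Y)\bigr)$. By Proposition~\ref{p:product_lexico}, under the natural isomorphism $X \otimes Y \cong \Lex(S) \otimes \Lex(T) \cong \Lex(S \times T)$ the right-hand cone becomes exactly $\Lex(S \times T)_+$, which is a cone by Lemma~\ref{l:lex_is_cone}.

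Finally, $K_p(X,Y)$ is a wedge contained in the cone $\Lex(S \times T)_+$, and a wedge inside a cone is itself a cone; in particular $u \in \Lex(S \times T)_+ \cap \bigl(-\Lex(S \times T)_+\bigr) = \{0\}$, completing the proof. I expect the only step requiring genuine care is the finite-dimensional reduction — namely checking that restricting to $X_0$ and $Y_0$ preserves both membership of $\pm u$ in the projective cone and the non-vanishing of $u$ — but the former is immediate from the definition of the induced cones and the latter from exactness of the tensor product of vector spaces. The remainder is a direct combination of the earlier results.
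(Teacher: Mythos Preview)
Your proof is correct and follows essentially the same route as the paper: reduce to finite-dimensional subspaces spanned by the tensor factors, enlarge the cones to lexicographic cones via Proposition~\ref{p:lex_maximal}, identify their projective cone with $\Lex(S\times T)_+$ via Proposition~\ref{p:product_lexico}, and conclude since a subwedge of a cone is a cone. You are slightly more explicit than the paper about the finite-dimensional reduction (the induced cones on $X_0,Y_0$ and injectivity of $X_0\otimes Y_0\to X\otimes Y$), but the argument is otherwise identical.
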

\begin{proof}
Let $\pm u \in K_p(X,Y)$. Then $u = \sum_{i=1}^k x_i \otimes y_i$ and $u = - \sum_{i=k+1}^n x_i \otimes y_i$, for $x_i \in X_+$ and $y_i \in Y_+$. Hence $\pm u \in K_p(E,F)$, where $E = \Sp\{x_i\}$ and $F = \Sp\{y_i\}$ are finite dimensional. Therefore, to show the theorem, we may assume that $X$ and $Y$ are finite dimensional.

In this case, by Proposition~\ref{p:lex_maximal}, $X_+$ and $Y_+$ are contained in lexicographic cones, of which the tensor product is of the form $\Lex(\{1, \ldots, d_X\} \times \{1, \ldots, d_Y \})_+$ by Proposition~\ref{p:product_lexico}. This is a cone by Lemma~\ref{l:lex_is_cone}, and so $K_p$, as a subwedge of this cone, is a cone.
\end{proof}

\bibliographystyle{alpha}
\bibliography{tensorovsbib}

\end{document}